\documentclass[12pt]{article}

\usepackage{amsmath, amssymb, setspace, amsthm, fullpage}
\usepackage[all]{xy}

\input diagxy

\newcommand{\Eq}{\mathrm{Eq}}
\newcommand{\Pt}{\mathrm{Pt}}
\newcommand{\C}{\mathbb{C}}

\newcommand{\V}{\mathcal{V}}
\newcommand{\mat}[4]{\begin{pmatrix}
#1 & #3 \\
#2 & #4
\end{pmatrix}
}

\author{Michael Hoefnagel}


\onehalfspacing

\title{Products and coequalizers in pointed categories}

\newtheorem{theorem}{Theorem}
\newtheorem{proposition}{Proposition}
\newtheorem{lemma}{Lemma}
\newtheorem{definition}{Definition}
\newtheorem{example}{Definition}
\begin{document}

\maketitle

\begin{abstract}
In this paper, we investigate the property (P) that finite products commute with arbitrary coequalizers in pointed categories. Examples of such categories include any regular unital or (pointed) majority category with coequalizers, as well as any pointed factor permutable category with coequalizers. We establish a Mal'tsev term condition characterizing pointed varieties of universal algebras satisfying (P). We then consider categories satisfying (P) locally, i.e., those categories for which every fibre $\Pt_{\C}(X)$ of the fibration of points $\pi:\Pt(\C) \rightarrow \C$ satisfies (P). Examples include any regular Mal'tsev or majority category with coequalizers, as well as any regular Gumm category with coequalizers. Varieties satisfying (P) locally are also characterized by a Mal'tsev term condition, which turns out to be equivalent to a variant of Gumm's shifting lemma. Furthermore, we show that the varieties satisfying (P) locally are precisely the varieties with normal local projections in the sense of Z. Janelidze. 
\end{abstract}

\section{Introduction}
Consider a variety $\C$ of universal algebras in which finite products commute with arbitrary coequalizers. That is, $\C$ is a variety which satisfies the following property: 
\begin{itemize}
\item[(P)] For any two coequalizer diagrams
\[
    \xymatrix{
    C_1 \ar@<-.5ex>[r]_{u} \ar@<.5ex>[r]^{v} & X \ar[r]^{q_1} & X'  & C_2 \ar@<-.5ex>[r]_{u'} \ar@<.5ex>[r]^{v'}& Y \ar[r]^{q_2} & Y',
    }
\]
in $\C$, the diagram 
\[
\xymatrix{ 
& C_1 \times C_2 \ar@<-.5ex>[r]_-{u\times u'} \ar@<.5ex>[r]^-{v \times v'}  & X \times Y \ar[r]^-{q_1 \times q_2} & X' \times Y', \\ 
}
\]
is a coequalizer diagram in $\C$.
\end{itemize}
 It is immediate that $\C$ must have constants, since if it did not, then the empty set would be the initial object $0$ in $\C$, and applying (P) to the two diagrams 
 \[
    \xymatrix{
    X \times X  \ar@<-.5ex>[r]_-{\pi_1} \ar@<.5ex>[r]^-{\pi_2} & X \ar[r] & 1   & 0 \ar@<-.5ex>[r]_{0} \ar@<.5ex>[r]^{0}& X \ar[r]^{1_X} & X
    }
\] 
would imply that
 \[
\xymatrix{
0 \ar@<-.5ex>[r]_-{0} \ar@<.5ex>[r]^-{0} & X \times X \ar[r]^-{\pi_1} & X
}
\]
 is a coequalizer for any algebra $X$. But this would force the variety to be trivial, so that $\C$ must possess constants. In this paper, we are concerned with varieties which possess a unique constant, i.e., \emph{pointed varieties}. We show that a pointed variety satisfies (P) if and only if it admits binary terms $b_{i}(x,y)$ and unary terms $c_{i}(x)$ for each $1 \leqslant i \leqslant m$ and $(m+2)$-ary terms $p_1,p_2,...,p_n$ satisfying the equations: 
\begin{align*}
p_1(x,y,b_{1}(x,y),b_{2}(x,y),...,b_{m}(x,y)) &= x, \\
p_i(y,x,b_{1}(x,y),...,b_{m}(x,y)) &= p_{i+1}(x,y,b_{1}(x,y),...,b_{m}(x,y)),\\
p_n(y,x,&b_{1}(x,y),b_{2}(x,y),...,b_{m}(x,y)) = y,  \\
\end{align*}
and for each $i = 1,...,n$ we have
\begin{align*}
p_i(0,0,c_1(z),...,c_{m}(z)) &= z.   \\
\end{align*}
We then consider varieties which satisfy (P) \emph{locally}, i.e., varieties for which each fibre $\Pt_{\C}(X)$ of the \emph{fibration of points} $\pi: \Pt(\C) \rightarrow \C$ satisfies (P). Every pointed variety $\C$ satisfying (P) necessarily has \emph{normal projections} in the sense of \cite{Janelidze2003}, i.e., every product projection in $\C$ is a cokernel, but the converse is not true. However, it turns out that a variety satisfies (P) locally if and only if it has \emph{local normal projections} \cite{Janelidze04} (see Theorem~\ref{Thm:local-product-coequalizer}). Furthermore, we show how both (P) and its local version may be seen as variants of Gumm's shifting lemma \cite{Gum83}, so that in particular any congruence modular variety of universal algebras satisfies (P) locally. 

\section{Main Results}
Recall from \cite{Janelidze2003} that a pointed category $\C$ has normal projections if any product projection in $\C$ is a normal epimorphism. For example, every \emph{subtractive} category \cite{Janelidze2005}, as well as any \emph{unital} category \cite{Bou02}, have normal projections. 
\begin{proposition} \label{Prop:P-NPP}
If $\C$ is a pointed category with binary products which satisfies (P), then the product of two normal epimorphisms is normal, and in particular $\C$ has normal projections. 
\end{proposition}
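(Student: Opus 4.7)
The plan is to split the statement into its two claims and deduce the second one from the first. The first claim is essentially a direct translation of (P), while the second follows by representing each product projection as the product of an identity with a terminal map.

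For the first claim, I would fix normal epimorphisms $q_1 : X \to X'$ and $q_2 : Y \to Y'$ and choose parallel pairs exhibiting them as coequalizers: $q_1 = \mathrm{coeq}(u,v)$ with $u,v : C_1 \rightrightarrows X$ and $q_2 = \mathrm{coeq}(u',v')$ with $u',v' : C_2 \rightrightarrows Y$. Applying (P) to these two coequalizer diagrams yields that
\[
\xymatrix{
C_1 \times C_2 \ar@<-.5ex>[r]_-{u\times u'} \ar@<.5ex>[r]^-{v\times v'} & X \times Y \ar[r]^-{q_1\times q_2} & X' \times Y'
}
\]
is a coequalizer, which is exactly the statement that $q_1 \times q_2$ is a normal epimorphism.

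For the ``in particular'' clause, I would express the first projection $\pi_1 : X \times Y \to X$, up to the canonical isomorphism $X \cong X \times 1$, as the product morphism $1_X \times \tau_Y : X \times Y \to X \times 1$, where $\tau_Y : Y \to 1$ is the unique arrow to the terminal object. Since $\C$ is pointed, $1$ coincides with the zero object $0$, and a direct check shows that $\tau_Y$ is the coequalizer of the pair $(1_Y, 0_Y) : Y \rightrightarrows Y$: a morphism $f : Y \to Z$ coequalizes this pair precisely when $f$ is the zero morphism, and every zero morphism factors uniquely through $\tau_Y$. Since the identity $1_X$ is trivially a coequalizer (of $(1_X,1_X)$), the first part of the proposition applied to $1_X$ and $\tau_Y$ produces a normal epimorphism $1_X \times \tau_Y$, which coincides with $\pi_1$ up to an isomorphism of its codomain; hence $\pi_1$ is normal, and $\pi_2$ follows by symmetry.

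There is no substantive obstacle: the argument is essentially a direct unpacking of (P), together with the standard presentation of a product projection as an identity times a terminal map, and the elementary observation that in a pointed category the terminal map $\tau_Y$ is always a cokernel.
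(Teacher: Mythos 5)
Your proof takes essentially the same route as the paper's: the first claim is read off from (P) applied to presentations of the normal epimorphisms as coequalizers, and the projection $\pi_1$ is realized (up to isomorphism of codomain) as the product of $1_X$ with $Y \to 0$. One precision is needed in the first step: presenting $q_1 = \mathrm{coeq}(u,v)$ for an arbitrary pair $(u,v)$ and applying (P) only exhibits $q_1 \times q_2$ as a coequalizer, i.e.\ a \emph{regular} epimorphism, which is not "exactly the statement" that it is normal; since the $q_i$ are assumed normal you should take $v$ and $v'$ to be the relevant zero morphisms, so that $v \times v' = 0$ and (P) exhibits $q_1 \times q_2$ as the cokernel of $u \times u'$ --- and likewise $1_X$ should be viewed as the cokernel of $0 \to X$ rather than as $\mathrm{coeq}(1_X, 1_X)$ when you feed it into the first claim.
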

\begin{proof}
The product of two normal epimorphisms being normal is a trivial consequence of (P), and any product projection $\pi_1:X \times Y \rightarrow X$ may be obtained as the product of $X \xrightarrow{1_X} X$ and $Y \rightarrow 0$, which are normal epimorphisms.
\end{proof}
\noindent
The lemma below has a straightforward proof, and therefore we leave the proof of it to the reader. 
\begin{lemma}\label{Lem:coequalizer}
In a category, given morphisms
\[
\xymatrix{ \bullet\ar[dr]^-{i_1} & & & & \\ 
& \bullet \ar@<-.5ex>[r]_{u} \ar@<.5ex>[r]^{v}  & \bullet\ar[r]^-{e_1} & \bullet\ar[r]^-{e_2} & \bullet 
\\ \bullet\ar[ur]_-{i_2} & & & & }
\]
such that $e_2\circ e_1 \circ u = e_2\circ e_1 \circ v $, where $e_1$ is a coequalizer of $u \circ  \iota_1, v \circ \iota_1$ and $e_2$ is a coequalizer of $e_1 \circ  u \circ i_2, e_1 \circ  v \circ i_2 $, then $e_2\circ e_1$ is a coequalizer of $v,u$.
\end{lemma}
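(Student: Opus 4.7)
The plan is to verify the universal property of a coequalizer for $e_2 \circ e_1$ with respect to the parallel pair $u, v$. Existence of the coequalizing property is immediate from the hypothesis $e_2 \circ e_1 \circ u = e_2 \circ e_1 \circ v$, so all the work lies in establishing the factorization property and its uniqueness.

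For the factorization, I would start with an arbitrary morphism $f$ satisfying $f \circ u = f \circ v$ and build the required factorization through $e_2 \circ e_1$ in two stages, mirroring the two-step construction in the hypothesis. First, precomposing $f \circ u = f \circ v$ with $\iota_1$ yields $f \circ u \circ \iota_1 = f \circ v \circ \iota_1$, so the universal property of $e_1$ (as the coequalizer of $u \circ \iota_1, v \circ \iota_1$) produces a unique $g$ with $g \circ e_1 = f$. Next, using the equation $f \circ u = f \circ v$ precomposed with $i_2$ and the identity $g \circ e_1 = f$, one checks that $g \circ (e_1 \circ u \circ i_2) = g \circ (e_1 \circ v \circ i_2)$, so the universal property of $e_2$ yields a unique $h$ with $h \circ e_2 = g$. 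Then $h \circ (e_2 \circ e_1) = g \circ e_1 = f$, which is the desired factorization.

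Uniqueness of $h$ is straightforward: both $e_1$ and $e_2$ are coequalizers and hence epimorphisms, so their composite $e_2 \circ e_1$ is an epimorphism, forcing any morphism $h'$ with $h' \circ e_2 \circ e_1 = f$ to equal $h$. This completes the verification of the universal property.

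I do not anticipate a significant obstacle here; the proof is essentially a two-step diagram chase. The only mildly subtle point is remembering to use the full equation $f \circ u = f \circ v$ (not merely its precomposition with $\iota_1$) in the second step, in order to discharge the hypothesis needed to invoke the universal property of $e_2$. This is presumably why the authors regard the proof as straightforward enough to leave to the reader.
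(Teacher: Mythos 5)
Your proof is correct: the two-stage factorization through $e_1$ and then $e_2$, followed by uniqueness via $e_2 \circ e_1$ being an epimorphism, is exactly the routine verification the paper has in mind when it leaves this lemma to the reader. No issues.
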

\noindent
In what follows, the $0$ symbol is used to denote an initial object in a category $\C$ (if it exists), and $0_X$ denotes the unique morphism $0_X: 0 \rightarrow X$. 
\begin{lemma}\label{Lem:product-coequalizer}
The following are equivalent for a category with binary products and an initial object.
\begin{enumerate}
\item $\C$ satisfies (P), i.e., the product of two coequalizers is a coequalizer.
\item The product of any coequalizer diagram $C \rightrightarrows X \rightarrow Y$ with the trivial coequalizer 
\[
\xymatrix{
0 \ar@<-.5ex>[r]_{0_X} \ar@<.5ex>[r]^{0_X}& Y \ar[r]^{1_{Y}} & Y,
}
\]
is a coequalizer diagram.
\end{enumerate}
\end{lemma}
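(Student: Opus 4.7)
The implication $(1) \Rightarrow (2)$ is immediate, since (2) is obtained from (1) by taking the second coequalizer to be the trivial one $0 \rightrightarrows Y \to Y$.

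For $(2) \Rightarrow (1)$, my plan is to exploit Lemma~\ref{Lem:coequalizer} by factoring the product $q_1 \times q_2$ as the composite
\[
X \times Y \xrightarrow{q_1 \times 1_Y} X' \times Y \xrightarrow{1_{X'} \times q_2} X' \times Y'.
\]
Setting $e_1 := q_1 \times 1_Y$ and $e_2 := 1_{X'} \times q_2$, I will identify each of $e_1, e_2$ as a coequalizer of a suitable parallel pair obtained from the given data via hypothesis~(2). The auxiliary morphisms required by Lemma~\ref{Lem:coequalizer} will come from the initial object: take $i_1 := 1_{C_1} \times 0_{C_2} : C_1 \times 0 \to C_1 \times C_2$ and $i_2 := 0_{C_1} \times 1_{C_2} : 0 \times C_2 \to C_1 \times C_2$, where $0_{C_1}, 0_{C_2}$ denote the unique morphisms out of $0$.

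With these choices the conditions of Lemma~\ref{Lem:coequalizer} become routine book-keeping. Composing $u \times u'$ and $v \times v'$ with $i_1$ yields the pair $u \times 0_Y, v \times 0_Y : C_1 \times 0 \rightrightarrows X \times Y$ (by uniqueness of morphisms out of $0$), and asserting that $e_1$ is a coequalizer of this pair is exactly the instance of (2) obtained by pairing the first coequalizer with the trivial coequalizer $0 \rightrightarrows Y \xrightarrow{1_Y} Y$. Symmetrically, composition with $i_2$ followed by $e_1$ yields the pair $0_{X'} \times u', 0_{X'} \times v' : 0 \times C_2 \rightrightarrows X' \times Y$, whose coequalizer is $e_2$ by applying (2) to the second coequalizer paired with $0 \rightrightarrows X' \xrightarrow{1_{X'}} X'$. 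Finally, the required equality $e_2 \circ e_1 \circ (u \times u') = e_2 \circ e_1 \circ (v \times v')$ reduces immediately to $q_1 u = q_1 v$ and $q_2 u' = q_2 v'$, both of which hold by assumption. Lemma~\ref{Lem:coequalizer} then delivers that $e_2 \circ e_1 = q_1 \times q_2$ is a coequalizer of $u \times u'$ and $v \times v'$, as desired.

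I anticipate no real obstacle: the one substantive point is recognising that hypothesis~(2) is strong enough to handle both partial products $q_1 \times 1_Y$ and $1_{X'} \times q_2$ separately, which is precisely what makes the two-step factorisation work. Everything else is the diagram chase already packaged inside Lemma~\ref{Lem:coequalizer}.
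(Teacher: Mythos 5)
Your proof is correct and follows essentially the same route as the paper: the same factorisation $q_1\times q_2=(1_{X'}\times q_2)\circ(q_1\times 1_Y)$, the same auxiliary morphisms $1_{C_1}\times 0_{C_2}$ and $0_{C_1}\times 1_{C_2}$, and the same appeal to Lemma~\ref{Lem:coequalizer}. Your additional verification of the hypothesis $e_2\circ e_1\circ(u\times u')=e_2\circ e_1\circ(v\times v')$ is a detail the paper leaves implicit.
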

\begin{proof}
We show that (2) implies (1). Suppose that 
\[
    \xymatrix{
    C_1 \ar@<-.5ex>[r]_{u} \ar@<.5ex>[r]^{v} & X \ar[r]^{q_1} & X'   & C_2 \ar@<-.5ex>[r]_{u'} \ar@<.5ex>[r]^{v'}& Y \ar[r]^{q_2} & Y'
    }
\]
are two coequalizer diagrams in $\C$. Then applying (2), we have that the diagrams 
\[
    \xymatrix{
    C_1 \times 0 \ar@<-.5ex>[r]_-{u \times 0_Y} \ar@<.5ex>[r]^-{v \times 0_Y} & X \times Y \ar[r]^{q_1 \times 1_Y} & X' \times Y   & 0 \times C_2 \ar@<-.5ex>[r]_-{0_{X'} \times u'} \ar@<.5ex>[r]^-{0_{X'} \times v'} & X' \times Y \ar[r]^{1_{X'} \times q_2} & X' \times Y',
    }
\]
are coequalizer diagrams. We may then apply Lemma~\ref{Lem:coequalizer} to the diagram
\[
\xymatrix{ C_1 \times 0 \ar[dr]^-{1_{C_1} \times 0_{C_2}} & & & & \\ 
& C_1 \times C_2 \ar@<-.5ex>[r]_-{u\times u'} \ar@<.5ex>[r]^-{v \times v'}  & X \times Y \ar[r]^-{q_1 \times 1_Y} &  X' \times Y \ar[r]^-{1_{A'} \times q_2} & X' \times Y' \\ 
0\times C_2 \ar[ur]_-{0_{C_1} \times 1_{C_2}} & & & & }
\]
to obtain $q_1 \times 1_Y \circ 1_{A'} \times q_2 = q_1 \times q_2$ as a coequalizer of $u\times u'$ and $v \times v'$.
\end{proof}
In what follows, we will be working with pointed categories. To simplify the notation, we will always denote zero-morphisms between objects by $0$, when there is no ambiguity.
\begin{proposition} \label{Prop:prod-coeq-egg-box}
Let $\C$ be a pointed category with finite limits and coequalizers, then the following are equivalent. 
\begin{enumerate}
\item $\C$ satisfies (P).
\item The product of a regular epimorphism in $\C$ with an isomorphism in $\C$ is a regular epimorphism, and for any effective equivalence relation $C$ on any product $A \times B$ in $\C$, we have that $(x,0)C(y,0)$ implies $(x,z) C (y,z)$ for any generalized elements $x,y:S \rightarrow A$ and $z: S \rightarrow B$. This property is illustrated by the diagram below, which can be seen as a variant of the ``egg-box property'' in the sense of \cite{Chajda1988}.
\[
\xymatrix{
(x,0) \ar@/^1.5pc/@{-}[r]^{C} \ar@{-}[r]^{\Eq(\pi_2)} \ar@{-}[d]_{\Eq(\pi_1)} & (y,0) \ar@{-}[d]^{\Eq(\pi_1)} \\
(x,z) \ar@/_1.5pc/@{..}[r]_{C} \ar@{-}[r]_{\Eq(\pi_2)} & (y,z) 
}
\]
\end{enumerate}
\end{proposition}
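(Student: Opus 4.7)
The plan is to use Lemma~\ref{Lem:product-coequalizer} as a bridge: in a pointed category, (P) reduces to the statement that for any coequalizer $C \xrightarrow{u,v} X \xrightarrow{q} X'$ and any object $Y$, the morphism $q \times 1_Y$ is the coequalizer of $\langle u, 0\rangle, \langle v, 0\rangle \colon C \rightrightarrows X \times Y$ (using $C \times 0 \cong C$ in the pointed setting). Both directions of the equivalence will be proved by comparing this reformulation to the shifting condition in (2).

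For the direction $(1) \Rightarrow (2)$, the clause that the product of a regular epimorphism with an isomorphism is a regular epimorphism is immediate, since a regular epimorphism is the coequalizer of its kernel pair and an isomorphism is trivially a coequalizer, so (P) gives that their product is a coequalizer and hence a regular epimorphism. For the shifting clause, I would let $C$ be an effective equivalence relation on $A \times B$ with coequalizer $q\colon A \times B \to Q$, set $q_0 = q \circ \langle 1_A, 0\rangle \colon A \to Q$, and let $\bar q \colon A \to \bar A$ be the coequalizer of the kernel pair $C_0 \rightrightarrows A$ of $q_0$. By Lemma~\ref{Lem:product-coequalizer}, $\bar q \times 1_B$ is the coequalizer of $\langle \iota_1, 0\rangle, \langle \iota_2, 0\rangle \colon C_0 \rightrightarrows A \times B$, and $q$ coequalizes this pair because $q_0 \iota_1 = q_0 \iota_2$. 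This yields a factorization $q = h \circ (\bar q \times 1_B)$, and from $(x, 0) C (y, 0)$, i.e., $q_0 x = q_0 y$, we obtain $\bar q x = \bar q y$, whence $q \langle x, z\rangle = h \langle \bar q x, z\rangle = h \langle \bar q y, z\rangle = q \langle y, z\rangle$, which is exactly $(x, z) C (y, z)$.

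For the direction $(2) \Rightarrow (1)$, I would verify the reformulation above. The first clause of (2) guarantees that $q \times 1_Y$ is a regular epimorphism, hence the coequalizer of its own kernel pair $K$, which is naturally isomorphic to $\ker(q) \times Y$ with projections $p_1 \times 1_Y, p_2 \times 1_Y$. To show that any $f \colon X \times Y \to Z$ satisfying $f \langle u, 0\rangle = f \langle v, 0\rangle$ also coequalizes $K$, I would set $f_0 = f \circ \langle 1_X, 0\rangle$; then $f_0$ coequalizes $(u,v)$ and so factors through $q$. For an arbitrary stage $S$ and generalized elements $a_1, a_2 \colon S \to X$ with $q a_1 = q a_2$ and any $z \colon S \to Y$, this factorization forces $f \langle a_1, 0\rangle = f \langle a_2, 0\rangle$, i.e., $(a_1, 0) \,\ker(f)\, (a_2, 0)$. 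Applying the shifting clause of (2) to the effective equivalence relation $\ker(f)$ on $X \times Y$ then upgrades this to $(a_1, z) \,\ker(f)\, (a_2, z)$, i.e., $f \langle a_1, z\rangle = f \langle a_2, z\rangle$, so $f$ coequalizes $K$ and factors through $q \times 1_Y$.

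The main obstacle will be the bookkeeping around generalized elements and kernel pairs: confirming that $\ker(q \times 1_Y) \cong \ker(q) \times Y$ as a relation on $X \times Y$, and that invoking (2) at the universal generalized element furnished by this kernel pair is exactly what is needed to upgrade the restricted equality $f\langle \cdot, 0\rangle$-comparison to an equality in the full product. Once this interpretation is pinned down, both implications reduce to standard universal-property chases.
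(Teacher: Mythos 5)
Your proof is correct and follows essentially the same route as the paper: both directions reduce (P) to one-sided products (the content of Lemma~\ref{Lem:product-coequalizer}) and then translate between coequalizers and their kernel pairs using the egg-box condition, exactly as in the paper's argument. The only cosmetic difference is in $(1) \Rightarrow (2)$, where the paper simply takes $q$ to be a coequalizer of the generalized elements $x,y$ themselves and concludes $\Eq(q \times 1_B) \leqslant C$ from effectiveness of $C$, whereas you route through the kernel pair of $q \circ \langle 1_A, 0\rangle$; both amount to the same universal-property chase.
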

\begin{proof}
For (1) $\implies$ (2), let $C$ be any effective equivalence relation on a product $A \times B$ in $\C$, and let $x,y:S \rightarrow A$ and $z:S \rightarrow B$ be any morphisms such that $(x,0) C (y,0)$. Let $q:A \rightarrow Q$ be a coequalizer of $x$ and $y$, then $q \times 1_B$ is a coequalizer of $(x,0)$ and $(y,0)$. Since $C$ is effective and $(x,0) C (y,0)$, it follows that $\Eq(q \times 1_B) \leqslant C$ and hence $(x,z) C (y,z)$. Note that the product of a regular epimorphism in $\C$ with an isomorphism in $\C$ being regular is an immediate consequence of (P). 

For (2) $\implies$ (1), suppose that we are given a coequalizer $q:A \rightarrow Q$ of $a:T \rightarrow A$ and $b:T \rightarrow A$, and let $C = \Eq(q')$ be the kernel equivalence relation of a coequalizer $q':A \times B \rightarrow Q'$ of $(a,0)$ and $(b,0)$. It suffices to show that $\Eq(q \times 1_B) = C$, since $q \times 1_B$ is regular by assumption. We always have $C \leqslant \Eq(q \times 1_B)$, and given any generalized elements $x,y:S \rightarrow A$ and $z:S \rightarrow B$, if $(x,z)\Eq(q \times 1_B)(y,z)$ then $(x,0)C(y,0)$, and hence we may apply (2) to get $(x,z)C(y,z)$, which shows that $\Eq(q \times 1_B) \leqslant C$.
\end{proof}

In what follows we will fix a finitely complete pointed category $\C$ which has coequalizers, and we will assume that regular epimorphisms in $\C$ are stable under binary products. Recall that $\C$ is \emph{unital} if for any binary relation $R$ in $\C$ between any two objects $A$ and $B$ in $\C$, we have $a R 0$ and $0 R b$ implies $a R b$ (see \cite{Bou02, ZJan06}). 

\begin{proposition} \label{Prop:unital-categories-satsify-P}
If $\C$ is unital, then $\C$ satisfies (P).
\end{proposition}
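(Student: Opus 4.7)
The plan is to apply Proposition~\ref{Prop:prod-coeq-egg-box} and verify condition (2). The standing hypothesis that regular epimorphisms are stable under binary products immediately delivers the first half of (2): the product of any regular epimorphism with an isomorphism is again regular. It therefore suffices to establish the egg-box property, namely that for any effective equivalence relation $C$ on a product $A \times B$ and any generalized elements $x, y : S \to A$ and $z : S \to B$ with $(x, 0) \, C \, (y, 0)$, we have $(x, z) \, C \, (y, z)$.

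The key idea is to repackage this as an instance of the unital axiom, using $A \times A$ as one of the two objects. I define $R$ to be the subobject of $(A \times A) \times B$ obtained as the pullback of $C \hookrightarrow (A \times B) \times (A \times B)$ along the morphism
\[
\theta : (A \times A) \times B \longrightarrow (A \times B) \times (A \times B)
\]
which on generalized elements sends $((a_1, a_2), b)$ to $((a_1, b), (a_2, b))$. By construction, a generalized element $((a_1, a_2), b) : T \to (A \times A) \times B$ factors through $R$ if and only if $(a_1, b) \, C \, (a_2, b)$ in $A \times B$.

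Two factorizations are now immediate: $\langle (x, y), 0 \rangle$ factors through $R$ since $(x, 0) \, C \, (y, 0)$ is the hypothesis, and $\langle (0, 0), z \rangle$ factors through $R$ by the reflexivity of $C$. In the notation of the unital axiom applied to the relation $R$ between $A \times A$ and $B$, these read precisely as $(x, y) \, R \, 0$ and $0 \, R \, z$, so unitality of $\C$ forces $(x, y) \, R \, z$, which unpacks to $(x, z) \, C \, (y, z)$, establishing the egg-box property. The principal conceptual step, and I expect the only real obstacle, is recognising that pairing up $x$ and $y$ into a single generalized element of $A \times A$ converts the ``shift by $z$'' statement into exactly the hypothesis-conclusion format that unitality provides; once this repackaging is in place, everything else is a routine pullback chase.
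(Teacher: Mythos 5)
Your proof is correct and follows essentially the same route as the paper: both repackage the egg-box condition of Proposition~\ref{Prop:prod-coeq-egg-box} as an instance of the unital axiom by bundling $x$ and $y$ into a single generalized element of $A \times A$, with the hypothesis giving $(x,y)\,R\,0$ and reflexivity of $C$ giving the other leg. The only (immaterial) difference is that you take $R$ to be a relation between $A \times A$ and $B$, whereas the paper uses $A \times A$ and $B \times B$ with $(0,0)$ and $(z,z)$ in place of your $0$ and $z$.
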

\begin{proof}
We show that $\C$ fulfills condition (2) of Proposition~\ref{Prop:prod-coeq-egg-box}. Let $C$ be any effective equivalence relation on any product $A \times B$ in $\C$ such that $(x,0) C (y,0)$. Then we may consider the binary relation $R$ between $A\times A$ and $B\times B$ defined by $(a,a') R (b,b')$ if and only if $(a,b) C (a',b')$. By assumption we have $(x,y) R (0,0)$ and $(0,0) R (z,z)$ (by reflexivity of $C$) so that $(x,y) R (z,z)$ and hence $(x,z) C (y,z)$. 
\end{proof}
Recall that $\C$ is a \emph{majority category} \cite{Hoe18a, Hoefnagel2019} if for any ternary relation $R$ between objects $X,Y,Z$ we have 
\[
(x,y,z') \in R \quad \text{and} \quad (x,y',z) \in R \quad \text{and} \quad (x',y,z) \in R \quad \implies  (x,y,z)\in R,\tag{$*$}
\]
for any generalized elements $x,x':S \rightarrow X$ and $y,y':S \rightarrow X$ and $z,z':S \rightarrow X$ in $\C$.
\begin{proposition} \label{Prop:majority-categories-satsify-P}
If $\C$ is a majority category, then $\C$ satisfies (P).
\end{proposition}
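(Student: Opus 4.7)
The plan is to verify condition (2) of Proposition~\ref{Prop:prod-coeq-egg-box}. The first half of that condition, namely that the product of a regular epimorphism with an isomorphism is a regular epimorphism, holds by the standing assumption that regular epimorphisms in $\C$ are stable under binary products. So it suffices to establish the egg-box property: given an effective equivalence relation $C$ on a product $A\times B$, and generalized elements $x,y:S\to A$ and $z:S\to B$ with $(x,0)\, C\, (y,0)$, one needs to show $(x,z)\, C\, (y,z)$.

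To activate the majority condition, I would introduce the ternary relation $R$ between $A$, $A$, and $B$ obtained by pulling back the monomorphism $C \hookrightarrow (A\times B)\times (A\times B)$ along the morphism
\[
\langle (\pi_1,\pi_3),(\pi_2,\pi_3)\rangle : A\times A \times B \longrightarrow (A\times B)\times (A\times B),
\]
so that on generalized elements $(a,a',b)\in R$ means exactly $(a,b)\, C\,(a',b)$.

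With this setup, three memberships in $R$ are immediate. First, $(x,y,0)\in R$ is precisely the hypothesis $(x,0)\, C\, (y,0)$. Second and third, $(x,x,z)\in R$ and $(y,y,z)\in R$ follow at once from the reflexivity of $C$. Applying the majority axiom $(*)$ to $R$ with the three triples $(x,y,0)$, $(x,x,z)$, $(y,y,z)$ (taking $z'=0$, $y'=x$, $x'=y$ in the notation of $(*)$) yields $(x,y,z)\in R$, which is exactly $(x,z)\, C\, (y,z)$, as required.

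The only subtlety is the selection of the correct ternary relation: the hypothesis $(x,0)\,C\,(y,0)$ and the two reflexivity instances must line up so as to match the three premises of the majority axiom, and this dictates the choice of $R$ above. Once $R$ is in place, the rest of the argument is a direct application of $(*)$, and no further categorical machinery is needed beyond the assumed finite limits and the hypothesis that regular epimorphisms are product-stable.
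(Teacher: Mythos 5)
Your proof is correct and follows essentially the same route as the paper: reduce to condition (2) of Proposition~\ref{Prop:prod-coeq-egg-box}, build a ternary relation out of $C$, and apply the majority axiom to the hypothesis together with two reflexivity instances. The only (immaterial) difference is the choice of ternary relation --- you take $R\subseteq A\times A\times B$ with $(a,a',b)\in R \Leftrightarrow (a,b)\,C\,(a',b)$, whereas the paper uses a relation between $A$, $A$ and $B\times B$ feeding $(z,z)$ into the third slot; both work identically.
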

\begin{proof}
Let $C$ be any effective equivalence relation on $A \times B$ and suppose that $x,y,z$ are as in the statement of Proposition~\ref{Prop:prod-coeq-egg-box}. Then we consider the ternary relation $R$ between $A, B$ and $A \times B$ defined by $(a,b, (a',b')) \in R$ if and only if $(a,a') C (b,b')$. Then by assumption we have $(x,y, (0,0)) \in R$, and by reflexivity we have $(x,x,(z,z)) \in R$ and $(y,y,(z,z)) \in R$. Applying the majority property ($*$) above to these three elements yields $(x,y,(z,z)) \in R$, so that $(x,z) C (y,z)$. 
\end{proof}
The notion of a \emph{Gumm} category \cite{DBMG2004} is the categorical analogue of varieties in which Gumm's shifting lemma holds \cite{Gum83}, i.e., congruence modular varieties. A finitely complete category $\C$ is a Gumm category if for any three equivalence relations $R,S,T$ on any object $X$ in $\C$ such that $R \cap S \leqslant T$, if $(x,y),(w,z) \in R$ and $(y,z), (x,w) \in S$ and $(y,z) \in T$ then we get $(x,w) \in T$. This implication of relations between the elements above is usually depicted with a diagram
\[
\xymatrix{
y \ar@/^1.5pc/@{-}[r]^{T} \ar@{-}[r]^{S} \ar@{-}[d]_{R} & z \ar@{-}[d]^{R} \\
x\ar@/_1.5pc/@{..}[r]_{T} \ar@{-}[r]_{S} & w 
}
\]
where the dotted curve represents the relation induced from the relations indicated by the solid curves. 
\begin{proposition} \label{Prop:Gumm-categories-satsify-P}
If $\C$ is a Gumm category, then $\C$ satisfies (P).
\end{proposition}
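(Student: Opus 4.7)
The plan is to verify condition (2) of Proposition~\ref{Prop:prod-coeq-egg-box} by invoking the shifting lemma for a judicious choice of three equivalence relations on $A \times B$. The overall shape of the argument parallels the proofs of Propositions~\ref{Prop:unital-categories-satsify-P} and \ref{Prop:majority-categories-satsify-P}: we package the ``egg-box'' hypothesis into an instance of the relational identity characterizing the categorical property in question.

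Concretely, fix an effective equivalence relation $C$ on $A \times B$ together with generalized elements $x, y : S \to A$ and $z : S \to B$ satisfying $(x,0) C (y,0)$; I need to deduce $(x,z) C (y,z)$. I would take the three equivalence relations on $A \times B$ to be $R := \Eq(\pi_1)$, $S' := \Eq(\pi_2)$, and $T := C$. The intersection $R \cap S'$ coincides with the diagonal of $A \times B$, so the compatibility hypothesis $R \cap S' \leqslant T$ of the shifting lemma holds automatically by reflexivity of $C$. The four generalized elements fit the shifting configuration with top row $(x,0), (y,0)$ and bottom row $(x,z), (y,z)$: the two vertical pairs share their first coordinate and hence belong to $R$, the two horizontal pairs share their second coordinate and hence belong to $S'$, and the top horizontal pair belongs to $T$ by hypothesis. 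Applying the Gumm property then yields that the bottom horizontal pair lies in $T = C$, which is exactly $(x,z) C (y,z)$.

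The only delicate point is a bookkeeping one, namely matching the roles of $x, y, w, z$ in the Gumm diagram drawn earlier to the four pairs $(x,0), (y,0), (x,z), (y,z)$ on $A \times B$; once this assignment is fixed, the verification of each side of the configuration is immediate. The contrast with the unital and majority cases is pleasant: there the ambient binary or ternary relation is cooked up \emph{ad hoc} from $C$, whereas here the kernel pairs of the two product projections supply the required auxiliary equivalence relations directly on the nose, and the hypothesis $R \cap S' \leqslant T$ is trivially satisfied.
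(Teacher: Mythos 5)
Your proof is correct and is essentially identical to the paper's: both take $R = \Eq(\pi_1)$, $S = \Eq(\pi_2)$, $T = C$, observe that $\Eq(\pi_1) \cap \Eq(\pi_2)$ is the diagonal so the hypothesis $R \cap S \leqslant T$ holds automatically, and apply the shifting lemma to the egg-box configuration of Proposition~\ref{Prop:prod-coeq-egg-box}(2).
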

\begin{proof}
The diagrammatic condition characterizing (P) in (2) of Proposition~\ref{Prop:prod-coeq-egg-box} is a restriction of the shifting lemma, where $R = \Eq(\pi_1)$, $S = \Eq(\pi_2)$ and $T = C$. Since we always have $\Eq(\pi_1) \cap \Eq(\pi_2) = \Delta_X$ for any two complementary product projections $\pi_1$ and $\pi_2$ of an object $X$ in $\C$, we may apply the shifting lemma to the diagram in Proposition~\ref{Prop:prod-coeq-egg-box} so that $\C$ satisfies (P).
\end{proof}
\begin{definition}[\cite{Gra04}]
A regular category $\C$ is said to be \emph{factor permutable} if $F \circ E = E \circ F$ for any factor relation $F$ and any equivalence relation $E$ on any object $X$ in $\C$. 
\end{definition}
\begin{lemma}[Lemma 2.5 in \cite{Gra04}] \label{Lem:weak-shifting}
In any factor permutable category $\C$ the \emph{weak shifting lemma holds}: for any equivalence relations $R$ and $S$ on $A \times B$ in $\C$ such that $\Eq(\pi_1) \cap R \leqslant S$, if $(a,b), (a,c), (d,e),(d,f)$ are related via the solid arrows as in the diagram 

\[
\xymatrix{
(a,c) \ar@/^1.5pc/@{-}[r]^{S} \ar@{-}[r]^{R} \ar@{-}[d]_{\Eq(\pi_1)} & (d,f) \ar@{-}[d]^{\Eq(\pi_1)} \\
(a,b)\ar@/_1.5pc/@{..}[r]_{S} \ar@{-}[r]_{R} & (d,e) 
}
\]
then we have $(a,b) S (d,e)$.
\end{lemma}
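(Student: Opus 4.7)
The plan is to apply factor permutability to the equivalence relation $R \cap S$ rather than to $R$ and $S$ separately. Denoting the generalized elements $\alpha = (a,b)$, $\beta = (a,c)$, $\gamma = (d,e)$, $\delta = (d,f)$, the given assumptions $\beta R \delta$ and $\beta S \delta$ combine to give $\beta (R \cap S) \delta$, and since $R \cap S$ is itself an equivalence relation on $A \times B$, factor permutability yields $\Eq(\pi_1) \circ (R \cap S) = (R \cap S) \circ \Eq(\pi_1)$.

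From $\alpha \Eq(\pi_1) \beta (R \cap S) \delta$ we obtain $(\alpha, \delta) \in \Eq(\pi_1) \circ (R \cap S) = (R \cap S) \circ \Eq(\pi_1)$. Passing to a suitable regular cover to realise the existential quantifier built into the composition of relations, we get an intermediate generalized element $\eta$ with $\alpha (R \cap S) \eta$ and $\eta \Eq(\pi_1) \delta$. In particular $\alpha R \eta$, $\alpha S \eta$, and $\eta \Eq(\pi_1) \gamma$ (via the given $\gamma \Eq(\pi_1) \delta$).

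From $\alpha R \gamma$ together with $\alpha R \eta$, the symmetry and transitivity of $R$ give $\eta R \gamma$. Combined with $\eta \Eq(\pi_1) \gamma$ and the hypothesis $\Eq(\pi_1) \cap R \leqslant S$, we conclude $\eta S \gamma$. Finally, the transitivity of $S$ applied to $\alpha S \eta$ and $\eta S \gamma$ yields the desired $\alpha S \gamma$.

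The only real conceptual step is the initial observation that both $R$ and $S$ hold simultaneously along the top edge $\beta \to \delta$, which lets us permute their intersection through $\Eq(\pi_1)$ in one stroke. If one instead tried to permute $R$ and $S$ through $\Eq(\pi_1)$ separately, one would obtain two \emph{a priori} distinct intermediaries and have no obvious way to merge them; recognising that this is avoidable by working with $R \cap S$ is the main pitfall to circumvent.
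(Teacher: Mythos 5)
Your argument is correct and is essentially the standard proof of this lemma (the one given in the cited source \cite{Gra04}, as the present paper only quotes the result without proof): one permutes the factor relation $\Eq(\pi_1)$ past the equivalence relation $R\cap S$ to produce the intermediate element $\eta$, and then applies the hypothesis $\Eq(\pi_1)\cap R\leqslant S$ together with transitivity of $S$. Your closing remark is also on point: permuting $\Eq(\pi_1)$ with $S$ alone would leave you unable to verify that the intermediate element is $R$-related to $(d,e)$, so working with $R\cap S$ is exactly the right move.
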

\begin{proposition}
Any pointed regular factor permutable category with coequalizers satisfies (P).
\end{proposition}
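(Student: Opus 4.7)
The plan is to verify condition~(2) of Proposition~\ref{Prop:prod-coeq-egg-box}. The requirement there that the product of a regular epimorphism with an isomorphism be a regular epimorphism is automatic: in a regular category, regular epimorphisms are stable under pullback, and $f \times 1_C$ arises as the pullback of a regular epimorphism $f : A \to B$ along the projection $B \times C \to B$, hence is itself regular. So it remains to establish the egg-box condition for every effective equivalence relation on a binary product.

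Accordingly, let $C$ be an effective equivalence relation on $A \times B$, and let $x, y : S \to A$ and $z : S \to B$ be generalized elements with $(x,0)\, C\, (y,0)$; the goal is $(x,z)\, C\, (y,z)$. I would apply Lemma~\ref{Lem:weak-shifting} with $R := \Eq(\pi_2)$ and with the role of the lemma's ``$S$'' played by $C$ itself. The hypothesis $\Eq(\pi_1) \cap R \leqslant C$ holds trivially, since $\Eq(\pi_1) \cap \Eq(\pi_2) = \Delta_{A \times B}$ is the smallest equivalence relation. The configuration required by the weak shifting lemma is then populated by the substitution $(a,c) := (x,0)$, $(d,f) := (y,0)$, $(a,b) := (x,z)$, $(d,e) := (y,z)$: the two $\Eq(\pi_1)$-edges hold because the first coordinates match within each vertical pair, the two $R$-edges hold because the second coordinates match within each horizontal pair, and the top $C$-edge is exactly the given hypothesis $(x,0)\, C\, (y,0)$. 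The conclusion of the weak shifting lemma is then the bottom $C$-edge $(x,z)\, C\, (y,z)$, which is what was required.

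Beyond recognising condition~(2) of Proposition~\ref{Prop:prod-coeq-egg-box} as the right target, the argument is essentially a dictionary translation between the egg-box configuration of Proposition~\ref{Prop:prod-coeq-egg-box} and the diagram appearing in Lemma~\ref{Lem:weak-shifting}. There is no real obstacle, as the non-trivial mathematical content is already packaged into Gran's weak shifting lemma for factor permutable categories, which the present result merely invokes.
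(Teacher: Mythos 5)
Your proof is correct and follows exactly the paper's route: both verify condition (2) of Proposition~\ref{Prop:prod-coeq-egg-box} by instantiating the weak shifting lemma (Lemma~\ref{Lem:weak-shifting}) with $R=\Eq(\pi_2)$ and $S=C$, using $\Eq(\pi_1)\cap\Eq(\pi_2)=\Delta\leqslant C$. The paper merely states this in one line; your write-up supplies the same dictionary translation explicitly.
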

\begin{proof}
Similar to the proof of Proposition~\ref{Prop:Gumm-categories-satsify-P}, we may apply the weak shifting property of Lemma~\ref{Lem:weak-shifting} to the diagram in (2) of Proposition~\ref{Prop:prod-coeq-egg-box}. 
\end{proof}
The theorem below is a Mal'tsev type characterization of pointed varieties of universal algebras satisfying (P). In the proof, we will use 2$\times$2 matrices to represent elements of a congruence $C$ on a product $A \times B$ in the following way:
\[
\mat{a}{b}{a'}{b'} \in C \Longleftrightarrow (a,b) C (a',b').
\]
\begin{theorem} \label{Thm:product-coequalizer-variety}
A pointed variety $\V$ of algebras satisfies (P) if and only if $\V$ admits binary terms $b_{i}(x,y)$ and unary terms $c_{i}(x)$ for each $1 \leqslant i \leqslant m$ and $(m+2)$-ary terms $p_1,p_2,...,p_n$ satisfying the equations: 
\begin{align*}
p_1(x,y,b_{1}(x,y),b_{2}(x,y),...,b_{m}(x,y)) &= x, \\
p_i(y,x,b_{1}(x,y),...,b_{m}(x,y)) &= p_{i+1}(x,y,b_{1}(x,y),...,b_{m}(x,y)),\\
p_n(y,x,&b_{1}(x,y),b_{2}(x,y),...,b_{m}(x,y)) = y,  \\
\end{align*}
and for each $i = 1,...,n$ we have
\begin{align*}
p_i(0,0,c_1(z),...,c_{m}(z)) &= z.   \\
\end{align*}  
\end{theorem}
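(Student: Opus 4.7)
The plan is to use Proposition~\ref{Prop:prod-coeq-egg-box}(2) as the operative characterization of (P) and translate both implications into computations with Mal'tsev chains on a suitable product of free algebras in $\V$. The ``if'' direction is a direct substitution argument; the ``only if'' direction reads the terms off a Mal'tsev chain witnessing a forced congruence.

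For the ``if'' direction, assume the terms $b_i$, $c_i$, $p_i$ exist. Given an effective congruence $C$ on a product $A \times B$ in $\V$ and elements $x,y \in A$, $z \in B$ with $(x,0)\,C\,(y,0)$, I would form $r_j = (b_j(x,y), c_j(z)) \in A \times B$ for $j = 1,\ldots,m$ and compute
\[
p_i\bigl((x,0),(y,0),r_1,\ldots,r_m\bigr) = \bigl(p_i(x,y,b_1(x,y),\ldots,b_m(x,y)),\, p_i(0,0,c_1(z),\ldots,c_m(z))\bigr).
\]
Using the last family of equations, this simplifies to $(p_i(x,y,\vec{b}(x,y)), z)$. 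Swapping the first two arguments gives $(p_i(y,x,\vec{b}(x,y)), z)$, and since $C$ is compatible with operations and $(x,0)\,C\,(y,0)$, the two elements are $C$-related. The coupling equations $p_i(y,x,\vec{b}(x,y)) = p_{i+1}(x,y,\vec{b}(x,y))$ then splice the $n$ steps into a single $C$-chain, and the endpoint equations $p_1(x,y,\vec{b}(x,y)) = x$ and $p_n(y,x,\vec{b}(x,y)) = y$ convert it into $(x,z)\,C\,(y,z)$, which is Proposition~\ref{Prop:prod-coeq-egg-box}(2).

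For the ``only if'' direction, let $A = F_{\V}(x,y)$ and $B = F_{\V}(z)$ be the free algebras in $\V$ on the indicated generators, and let $\Theta$ be the congruence on $A \times B$ generated by the single pair $\bigl((x,0),(y,0)\bigr)$. Since every congruence in a variety is effective, (P) together with Proposition~\ref{Prop:prod-coeq-egg-box}(2) force $(x,z)\,\Theta\,(y,z)$. Invoking Mal'tsev's chain description of a principal congruence, I obtain a sequence $(x,z) = e_0, e_1, \ldots, e_n = (y,z)$ in $A \times B$ and, for each $i$, a term $q_i$ and parameters $\vec{s}_i \in (A \times B)^{k_i}$ with $\{e_{i-1}, e_i\} = \{q_i((x,0), \vec{s}_i), q_i((y,0), \vec{s}_i)\}$. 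I would then normalize the chain: unify parameter lists into a single tuple $\vec{r} = (r_1,\ldots,r_m) \in (A \times B)^m$ by padding (each $p_i$ is obtained from $q_i$ by prepending two slots for the generators and ignoring parameter slots not originally needed), and reorient each step so that $e_{i-1} = p_i((x,0),(y,0),\vec{r})$ and $e_i = p_i((y,0),(x,0),\vec{r})$. Finally, writing $r_j = (b_j(x,y), c_j(z))$ with $b_j \in F_{\V}(x,y)$ and $c_j \in F_{\V}(z)$, and projecting the identities $e_0 = (x,z)$, $e_n = (y,z)$, and the compatibility $p_i((y,0),(x,0),\vec{r}) = p_{i+1}((x,0),(y,0),\vec{r})$ onto each coordinate yields exactly the four families of equations in the statement (the second-coordinate chain $p_i(0,0,\vec{c}(z)) = p_{i+1}(0,0,\vec{c}(z))$ together with the anchor $p_1(0,0,\vec{c}(z)) = z$ upgrade by induction to $p_i(0,0,\vec{c}(z)) = z$ for every $i$).

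The main technical obstacle is the normalization in the ``only if'' direction: ensuring a uniform arity $m+2$ for all $p_i$, a single shared parameter tuple $\vec{r}$, and a fixed orientation of the two generators across all chain steps. These are standard Mal'tsev-chain manipulations (relabeling variables, inserting dummy arguments, swapping sides of a step), but the choice $A = F_{\V}(x,y)$ and $B = F_{\V}(z)$ rather than the single free algebra $F_{\V}(x,y,z)$ is important so that each parameter $r_j$ decomposes cleanly into a binary term in $x,y$ and a unary term in $z$, yielding the binary terms $b_j$ and unary terms $c_j$ in the statement.
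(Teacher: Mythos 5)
Your proposal is correct and follows essentially the same route as the paper: both directions reduce (P) to the egg-box condition of Proposition~\ref{Prop:prod-coeq-egg-box}, the converse is the same substitution-and-transitivity computation with the pairs $(b_j(x,y),c_j(z))$, and the forward direction extracts the terms from a Mal'tsev chain for the principal congruence generated by $((x,0),(y,0))$ on $F_{\V}(x,y)\times F_{\V}(z)$, with the same normalization of arities and parameters. Your explicit remark that the second-coordinate identities $p_i(0,0,c_1(z),\dots,c_m(z))=z$ follow by induction along the chain is a useful clarification of a step the paper leaves implicit.
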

\begin{proof}
Consider the principal congruence $C$ on $F_{\V}(x,y) \times F_{\V}(z)$ generated by the single relation $(x,0)C(y,0)$, where $F_{\V}(x,y)$ and $F_{\V}(z)$ are the free algebras over $\{x,y\}$ and $\{z\}$ respectively. By (2) of  Proposition~\ref{Prop:prod-coeq-egg-box} it follows that $(x,z) C (y,z)$. The congruence $C$ may be obtained by closing the relation
\[
\big\{\mat{x}{0}{y}{0}, \mat{y}{0}{x}{0}\big\}
\]
first under reflexivity, then under all operations in $\V$, and then under transitivity. Therefore, there exists $z_1,...,z_n \in A \times B$ such that $z_1 = (x,z)$ and $z_n = (y,z)$, where:
\[
(z_{i}, z_{i+1}) = p_i(\mat{x}{0}{y}{0}, \mat{y}{0}{x}{0}, \mat{b_{1,i}(x,y)}{c_{1,i}(z)}{b_{1,i}(x,y)}{c_{1,i}(z)}, \cdots,\mat{b_{m_i,i}(x,y)}{c_{m_i,i}(z)}{b_{m_i,i}(x,y)}{c_{m_i,i}(z)}).
\]
for certain terms $p_1,\dots,p_n$ and elements $(b_{ij}(x,y),c_{ij}(z)\in \mathsf{Fr}\{x,y\}\times \mathsf{Fr}\{z\}$. Without loss of generality, we may assume that $m_1=\dots=m_n=m$, $b_{1j}=\dots=b_{nj}=b_j$ and $c_{1j}=\dots=c_{nj}=c_j$ for all $j\in\{1,\dots,m\}$. Then, writing out the identities above coordinate-wise and noting that since $(b_j,c_j)\in \mathsf{Fr}\{x,y\}\times \mathsf{Fr}\{z\}$, each $b_j$ is a binary term $b_j(x,y)$ and each $c_j$ is a unary term $c_j(z)$, we get the identities in the statement of the theorem.  

For the converse, suppose that $C$ is any congruence on $X \times Y$ in $\V$, and that $(x,0) C (y,0)$. Consider the elements of $C$ defined by:
\[
(z_{1,i}, z_{2, i}) = p_i(\mat{x}{0}{y}{0}, \mat{y}{0}{x}{0}, \mat{b_1(x,y)}{c_1(z)}{b_1(x,y)}{c_1(z)}, \cdots,\mat{b_m(x,y)}{c_m(z)}{b_m(x,y)}{c_m(z)}).
\]
Then the equations at (2) imply that $z_{2,i} = z_{1, i+1}$ as well as $(x,z) = z_{1,1}$ and $(y,z) = z_{2,n}$, so that by the transitivity of $C$ we get $(x,z) C (y,z)$. 
\end{proof}

\section{Local normal projections and products of coequalizers}
Recall that the \emph{category of points} $\Pt_{\C}(X)$ of an object $X$ in a category $\C$ consists of triples $(A,p,s)$ where $p:A \rightarrow X$ is a split epimorphism and $s$ is a splitting for $p$. A morphism $f:(A,p,s) \rightarrow (B,q,t)$ in $\Pt_{\C}(X)$ is a morphism $f:A \rightarrow B$ such that $q \circ f = p$ and $f \circ s = t$. The category $\Pt_{\C}(X)$ is pointed, where the zero-object is $(X,1_X, 1_X)$, and if $\C$ is finitely complete, then so is $\Pt_{\C}(X)$. The zero-morphism from $(A,p,s)$ to $(B,q,t)$ is given by $t \circ p$. When $\C$ has coequalizers and pullbacks, then $\Pt_{\C}(X)$ has products and coequalizers. Moreover, if $\C$ is a regular category, then so is $\Pt_{\C}(X)$. 

In what follows, we will say that a category $\C$ \emph{satisfies (P) locally} if for every object $X$ in $\C$ the category $\Pt_{\C}(X)$ satisfies (P). 

\begin{example}
Every regular Mal'tsev category \cite{CPP91, CLP91} with coequalizers satisfies (P) locally. This is because a finitely complete category $\C$ is Mal'tsev if and only if for any object $X$ the category $\Pt_{\C}(X)$ is unital (see \cite{Bou96}). Moreover, $\C$ being regular with coequalizers implies that $\Pt_{\C}(X)$ is pointed regular with coequalizers. Then by Proposition~\ref{Prop:unital-categories-satsify-P} it follows that $\Pt_{\C}(X)$ satisfies (P), for any object $X$ in $\C$. 
\end{example}

\begin{example}
For essentially the same reasons as the above example every regular majority category with coequalizers satisfies (P) locally: if $\C$ is a regular majority category with coequalizers, then so is $\Pt_{\C}(X)$ for any object $X$, which is moreover pointed. Hence by Proposition~\ref{Prop:majority-categories-satsify-P} we have that $\C$ satisfies (P) locally. 
\end{example}

\begin{example}
Every regular  Gumm category $\C$ with coequalizers satisfies (P) locally: if $\C$ is a Gumm category with coequalizers, then so is $\Pt_{\C}(X)$ (see Lemma~2.5 in \cite{DBMG2004} and the discussion proceeding it), and hence by Proposition~\ref{Prop:Gumm-categories-satsify-P} it follows that $\Pt_{\C}(X)$ satisfies (P) for any object $X$ in $\C$. 
\end{example}

Recall from the introduction that a category $\C$ is said to have normal local projections if for any object $X$ in $\C$, the category $\Pt_{\C}(X)$ has normal projections.  
\begin{theorem}[\cite{Janelidze04}] \label{Thm:Janelidze-2004}
The following are equivalent for a variety $\V$ of universal algebras. 
\begin{itemize}
\item $\V$ has normal local projections. 
    \item $\V$ admits binary terms $b_1,...,b_m, c_1,...,c_m$ and $(m+2)$-ary terms $p_1,p_2,...,p_n$ such that
    \begin{itemize}
        \item $p_0(x,y,b_1(x,y),...,b_m(x,y)) = x$.
        \item $p_n(y,x,b_1(x,y),...,b_m(x,y)) = y$.
        \item $p_{i+1}(y,x,b_1(x,y),...,b_m(x,y)) = p_i(x,y,b_1(x,y),...,b_m(x,y))$.
        \item For any $i \in \{1,2,...,n\}$ we have  $p_i(u,u,c_1(u,v),...,c_m(u,v)) = v$ and $b_i(z,z) = c_i(z,z)$. 
    \end{itemize}
\end{itemize}
\end{theorem}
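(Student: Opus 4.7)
The plan is to adapt the strategy of Theorem~\ref{Thm:product-coequalizer-variety} to the local setting, carrying out the Mal'tsev-chain extraction inside a fibre $\Pt_{\V}(X)$ rather than in $\V$ itself. The essential new feature is that the role of the constant $0$ is played, in each fibre, by the section of the relevant split epi, and this is what will introduce the second pair of variables $(u,v)$ and the compatibility identity $b_i(z,z)=c_i(z,z)$, neither of which appears in Theorem~\ref{Thm:product-coequalizer-variety}.

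For the forward direction I would specialize to the fibre $\Pt_{\V}(F_{\V}\{z\})$ and consider the two free split epimorphisms $(A,p,s)$ with $A=F_{\V}\{z,x,y\}$, $p(x)=p(y)=z$, and $(B,q,t)$ with $B=F_{\V}\{z,v\}$, $q(v)=z$. Their product in the fibre has underlying algebra the pullback $A\times_{F_{\V}\{z\}}B$. The kernel of the projection $\pi_2$ computed in $\Pt_{\V}(F_{\V}\{z\})$ is the embedding $a\mapsto(a,tp(a))$, and normality of $\pi_2$ gives $\Eq(\pi_2)=\Theta$, the congruence on $A\times_{F_{\V}\{z\}}B$ generated by the pairs $((a,tp(a)),(sp(a),tp(a)))$ for $a\in A$; upon parameterizing $a$ by binary terms $b(x,y)$ (with $z$ treated as a parameter), these generators take the form $((b(x,y),b(z,z)),(b(z,z),b(z,z)))$. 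Applying Mal'tsev's congruence generation theorem to the universal pair $((x,v),(y,v))\in\Eq(\pi_2)$ produces a sequence $(x,v)=w_0,\ldots,w_n=(y,v)$ whose successive steps are obtained by applying $(m+2)$-ary terms $p_i$ to $m$ such generators together with two reflexive pairs whose $A$-coordinates are $x$ and $y$. Reading off the $A$-coordinates of the chain yields $p_0(x,y,b_1(x,y),\ldots)=x$, $p_n(y,x,b_1(x,y),\ldots)=y$, together with the interleaving identity $p_{i+1}(y,x,\ldots)=p_i(x,y,\ldots)$; reading off the $B$-coordinate (which is forced to remain equal to $v$ throughout, since in each generator the $B$-coordinates on the left and right agree) and re-parameterizing those same generators from the $B$-side by binary terms $c_i(u,v)$ produces $p_i(u,u,c_1(u,v),\ldots,c_m(u,v))=v$, while the pullback constraint $p=q$ over $X$ forces the diagonal compatibility $b_i(z,z)=c_i(z,z)$.

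For the converse, I would directly check that the existence of the terms forces each $\Pt_{\V}(X)$ to have normal projections. Given split epis $(A,p,s),(B,q,t)$ over any $X$ and any pair $((a_1,b),(a_2,b))\in\Eq(\pi_2)$ in $A\times_X B$, I would build a Mal'tsev chain in $\Theta$ connecting $(a_1,b)$ to $(a_2,b)$ by substituting $x\mapsto a_1$, $y\mapsto a_2$ in the $p_j$ and in the $b_i(x,y)$, using the generators of $\Theta$ on the $A$-side. The interleaving identity keeps successive elements of the chain matched, while the identity $p_i(u,u,c_1(u,v),\ldots)=v$ combined with $b_i(z,z)=c_i(z,z)$ guarantees both that all intermediate terms actually lie in the pullback (so the chain is well-formed) and that its endpoints land on $(a_1,b)$ and $(a_2,b)$. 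By symmetry the same argument handles $\pi_1$.

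The main obstacle will be the simultaneous bookkeeping on both coordinates of the pullback: the Mal'tsev derivation acts coherently on both the $A$- and $B$-coordinates at once, and reconciling the $A$-side parameterization $b_i(x,y)$ with the $B$-side parameterization $c_i(u,v)$ on the diagonal is what forces the compatibility condition $b_i(z,z)=c_i(z,z)$ --- the distinctively local ingredient of the characterization, with no analogue in the global version of Theorem~\ref{Thm:product-coequalizer-variety}.
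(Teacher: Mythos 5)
A preliminary remark: the paper does not prove this theorem at all --- it is imported from \cite{Janelidze04} --- so your proposal can only be judged on its own merits. Your converse direction is essentially correct: substituting $x\mapsto a_1$, $y\mapsto a_2$, $u\mapsto tp(a_1)$, $v\mapsto b$ into the matrix chain (exactly as in the paper's proof of (4)$\implies$(5) of Theorem~\ref{Thm:local-product-coequalizer}) connects $(a_1,b)$ to $(a_2,b)$ inside the cokernel congruence $\Theta$, provided you add the small missing observation that $((a_1,u),(a_2,u))\in\Theta$ in the first place (via the two generators with $a=a_1$ and $a=a_2$, which share the right-hand element $(sp(a_1),u)$, plus symmetry and transitivity); and $b_i(z,z)=c_i(z,z)$ does keep every entry inside the pullback.

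The forward direction, however, has a genuine gap. In your universal instance the congruence $\Theta$ witnessing normality of $\pi_2$ is generated by the family $\bigl((a,\overline{a}),(\overline{a},\overline{a})\bigr)$ indexed by \emph{all} $a\in F_{\V}\{z,x,y\}$, where $\overline{a}=a(z,z,z)$. Mal'tsev congruence generation applied to $((x,v),(y,v))\in\Theta$ therefore produces steps in which a term is applied to arbitrarily many such generators (each of which, read left to right, merely collapses some subterm $a(x,y,z)$ to its diagonal $a(z,z,z)$) together with reflexive pairs; no single step exchanges $x$ and $y$ in two distinguished argument places. Consequently, reading off coordinates cannot yield identities of the shape $p_0(x,y,b_\bullet(x,y))=x$, $p_n(y,x,b_\bullet(x,y))=y$, $p_{i+1}(y,x,\dots)=p_i(x,y,\dots)$; what you would actually extract is a syntactically different Mal'tsev condition whose equivalence to the stated one would still have to be proved. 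The stated identities are what fall out of congruence generation applied to the \emph{single} generating pair $((x,u),(y,u))$ of a congruence on the pullback $F_{\V}\{x,y\}\times_{F_{\V}\{w\}}F_{\V}\{u,v\}$ --- that is, out of condition (5) of Theorem~\ref{Thm:local-product-coequalizer}, where the reflexive pairs $((b_j,c_j),(b_j,c_j))$ range over the pullback and membership in the pullback is precisely the identity $b_j(z,z)=c_j(z,z)$. But deducing that condition from normal local projections is the real content of the forward implication (the congruence $C$ there is arbitrary, not the cokernel congruence of a kernel, and the structure maps are arbitrary morphisms, not split epimorphisms), and your sketch does not address this step.
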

In what follows we will see that a variety of universal algebras satisfies (P) locally if and only if it has local normal projections. 
\subsection{Characterization of varieties satisfying (P) locally}
Given two objects $(A,p,s)$ and $(B,q,t)$ in $\Pt_{\C}(X)$ consider the diagram below where the square is a pullback 
\[
\xymatrix{
& X \ar[rd]|{(s,t)}\ar@/^1pc/@{->}[rrd]^{t} \ar@/_1pc/@{->}[rdd]_{s} & & \\
& & A \times_X B \ar[r]^{p_2} \ar[d]_{p_1} \ar[dr]|d & B \ar[d]^{q} \\
& & A \ar[r]_{p} & X
}
\]
Then $(A \times_X B, d, (s,t))$ together with $p_1,p_2$ form a product for $(A,p,s)$ and $(B,q,t)$ in $\Pt_{\C}(X)$. Then we may adapt Lemma~\ref{Lem:product-coequalizer} to the local situation, and obtain the following: 
\begin{proposition}
 For a category $\C$ with finite limits, the following are equivalent.
\begin{itemize}
    \item $\C$ satisfies (P) locally. 
    \item For any object $X$ in $\C$, and for any coequalizer diagram
    \[
\xymatrix{
(C, r, n) \ar@<-.5ex>[r]_-{u} \ar@<.5ex>[r]^-{v} & (A,p,s) \ar[r]^{f} & (B,q,t)
}
\]
in $\Pt_{\C}(X)$, the diagram 
    \[
\xymatrix{
C\ar@<-.5ex>[r]_-{(u, t \circ r)} \ar@<.5ex>[r]^-{(v, t \circ r)} & A\times_X B \ar[r]^{f \times_X 1_B} & B \times_X B
}
\]
is a coequalizer in $\C$. 
\end{itemize}
\end{proposition}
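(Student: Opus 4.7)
The plan is to apply Lemma~\ref{Lem:product-coequalizer} inside each pointed category $\Pt_{\C}(X)$ and then transport the resulting coequalizer statement back to $\C$ via the forgetful functor $\Pt_{\C}(X)\to \C$. To this end, I would first unpack the structure of $\Pt_{\C}(X)$ as a pointed category with binary products and an initial object: the zero object is $(X,1_X,1_X)$; the binary product of $(A,p,s)$ and $(B,q,t)$ is the triple $(A\times_X B, d, (s,t))$ as in the pullback diagram preceding the proposition; the product of any $(C,r,n)$ with the zero object $(X,1_X,1_X)$ is canonically $(C,r,n)$ itself; and the (unique, hence zero) morphism $(C,r,n)\to(B,q,t)$ factoring through $(X,1_X,1_X)$ equals $t\circ r$. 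With these identifications the trivial coequalizer in $\Pt_{\C}(X)$ paired with $(C,r,n)\rightrightarrows (A,p,s)\xrightarrow{f}(B,q,t)$ (as in Lemma~\ref{Lem:product-coequalizer}(2)) is
\[
\xymatrix{
(X,1_X,1_X) \ar@<-.5ex>[r]_-{t} \ar@<.5ex>[r]^-{t} & (B,q,t) \ar[r]^-{1_B} & (B,q,t),
}
\]
and taking the product of the two coequalizer diagrams in $\Pt_{\C}(X)$ yields precisely
\[
\xymatrix{
C \ar@<-.5ex>[r]_-{(v, t \circ r)} \ar@<.5ex>[r]^-{(u, t \circ r)} & A \times_X B \ar[r]^-{f \times_X 1_B} & B \times_X B
}
\]
equipped with its induced split epi structures over $X$. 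Lemma~\ref{Lem:product-coequalizer}, applied to $\Pt_{\C}(X)$, then says that $\Pt_{\C}(X)$ satisfies (P) if and only if every such product diagram is a coequalizer in $\Pt_{\C}(X)$.

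To close the loop, I would invoke the standard fact that the forgetful functor $\Pt_{\C}(X)\to \C$ preserves and reflects coequalizers of parallel pairs lying in $\Pt_{\C}(X)$: given a coequalizer $e: A\times_X B\to Q$ in $\C$ of the underlying morphisms, the quotient $Q$ inherits a unique split epi structure over $X$ (with section $e\circ(s,t)$) making $e$ a coequalizer in $\Pt_{\C}(X)$, and conversely any coequalizer in $\Pt_{\C}(X)$ arises this way. Hence the displayed diagram is a coequalizer in $\Pt_{\C}(X)$ precisely when its underlying diagram in $\C$ is a coequalizer, which is the condition stated in the proposition.

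The main obstacle is purely bookkeeping: one has to verify that the product in $\Pt_{\C}(X)$ really does induce the morphisms $(u, t\circ r)$, $(v, t\circ r)$ and $f\times_X 1_B$ on underlying objects, and that the split epi structures $(s,t)$ on $A\times_X B$ and $(t,t)$ on $B\times_X B$ are indeed the ones coming from the pullback. Once these identifications are in place, Lemma~\ref{Lem:product-coequalizer} together with the preservation/reflection of coequalizers by the forgetful functor finishes the argument.
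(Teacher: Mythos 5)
Your argument is correct and is exactly the route the paper intends: the paper states this proposition without proof, saying only that one "adapts Lemma~\ref{Lem:product-coequalizer} to the local situation," and your proof carries out that adaptation — identifying the zero object, products, and zero morphisms of $\Pt_{\C}(X)$, applying the lemma there, and transferring the coequalizer condition along the forgetful functor $\Pt_{\C}(X)\to\C$ (which preserves and reflects coequalizers, e.g.\ via the object $(Z\times X,\pi_2,(g\circ s,1_X))$ for testing the universal property in $\C$). Nothing essential is missing.
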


\begin{theorem} \label{Thm:local-product-coequalizer}
For a variety $\V$ the following are equivalent.
\begin{enumerate}
    \item For any object $X$ in $\V$ the product of two coequalizer diagrams in $\Pt_{\V}(X)$ is a coequalizer diagram. 
    \item For any object $X$ in $\V$ the product of two normal-epimorphisms $\Pt_{\V}(X)$ is a normal epimorphism.
    \item $\V$ has normal local projections. 
    \item $\V$ admits binary terms $b_1,...b_m, c_1,...,c_m$ and $(m+2)$-ary terms $p_1,p_2,...,p_n$ such that
    \begin{itemize}
        \item $p_0(x,y,b_1(x,y),...,b_m(x,y)) = x$.
        \item $p_n(y,x,b_1(x,y),...,b_m(x,y)) = y$.
        \item $p_{i+1}(y,x,b_1(x,y),...,b_m(x,y)) = p_i(x,y,b_1(x,y),...,b_m(x,y))$.
        \item For any $i \in \{1,2,...,n\}$ we have  $p_i(u,u,c_1(u,v),...,c_m(u,v)) = v$ and $b_i(z,z) = c_i(z,z)$. 
    \end{itemize}
    \item For any two morphisms $f:A \rightarrow X$ and $g:B \rightarrow X$, and any congruence $C$ on the pullback $A \times_X B$ of $f$ along $g$, if $(x,u) C (y,u)$ then $(x,v) C (y,v)$ where $(x,v), (y,v) \in A \times_X B$. This can be visualized as follows
    \[
\xymatrix{
(x,u) \ar@/^1.5pc/@{-}[r]^{C} \ar@{-}[r]^{\Eq(p_2)} \ar@{-}[d]_{\Eq(p_1)} & (y,u) \ar@{-}[d]^{\Eq(p_1)} \\
(x,v) \ar@/_1.5pc/@{..}[r]_{C} \ar@{-}[r]_{\Eq(p_2)} & (y,v) 
}
\]
\end{enumerate}
\end{theorem}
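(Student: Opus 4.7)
The plan is to prove the cycle $(1) \Rightarrow (2) \Rightarrow (3) \Leftrightarrow (4) \Rightarrow (5) \Rightarrow (1)$. The implications $(1) \Rightarrow (2)$ and $(2) \Rightarrow (3)$ are routine adaptations of Proposition~\ref{Prop:P-NPP} to each fibre $\Pt_{\V}(X)$: these fibres are pointed finitely complete categories with coequalizers in which the canonical morphism $(B,q,t) \to (X, 1_X, 1_X)$ to the zero object is a normal epimorphism (it is the cokernel of $1_{(B,q,t)}$, as can be verified directly), so every product projection $(A,p,s) \times_X (B,q,t) \to (A,p,s)$ in $\Pt_{\V}(X)$ arises as a product of an identity with a normal epimorphism. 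The equivalence $(3) \Leftrightarrow (4)$ is Theorem~\ref{Thm:Janelidze-2004}.

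The key computation is $(4) \Rightarrow (5)$. Given a congruence $C$ on a pullback $A \times_X B$, a hypothesis $(x,u)\, C\, (y,u)$ with all four elements lying over a common point $w = f(x) = g(u) \in X$, and an element $v$ with $g(v) = w$, the identity $b_j(z,z) = c_j(z,z)$ first ensures that each pair $(b_j(x,y), c_j(u,v))$ lies in $A \times_X B$. For $i \in \{1, \dots, n\}$ set
\[
\alpha_i = p_i\bigl((x,u), (y,u), (b_1(x,y), c_1(u,v)), \dots, (b_m(x,y), c_m(u,v))\bigr),
\]
and let $\beta_i$ be defined analogously but with $(x,u)$ and $(y,u)$ swapped. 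The identity $p_i(u, u, c_\cdot(u,v)) = v$ collapses the second coordinate, giving $\alpha_i = (p_i(x, y, b_\cdot(x,y)), v)$ and $\beta_i = (p_i(y, x, b_\cdot(x,y)), v)$; the simplicial identity $p_{i+1}(y, x, b_\cdot) = p_i(x, y, b_\cdot)$ then yields $\beta_{i+1} = \alpha_i$, while $\beta_1 = (x, v)$ and $\beta_n = (y, v)$ (using $p_0(x, y, b_\cdot) = x$ and $p_n(y, x, b_\cdot) = y$). Since $(x,u) \, C \, (y,u)$, congruence closure gives $\alpha_i \, C \, \beta_i$ for every $i$, producing a chain
\[
(x, v) = \beta_1 \, C \, \alpha_1 \, C \, \alpha_2 \, C \, \cdots \, C \, \alpha_{n-1} = \beta_n = (y, v),
\]
and transitivity of $C$ closes the argument.

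For $(5) \Rightarrow (1)$ I apply the local version of Proposition~\ref{Prop:prod-coeq-egg-box}, so it suffices to verify the egg-box condition inside $\Pt_{\V}(X)$. The product of $(A,p,s)$ and $(B,q,t)$ in $\Pt_{\V}(X)$ is the pullback $(A \times_X B, d, (s,t))$, and an effective equivalence relation on it is a suitable congruence $C$ on the pullback in $\V$. The zero morphism from $(A,p,s)$ to $(B,q,t)$ is $t \circ p$, so the hypothesis $(x, 0) \, C \, (y, 0)$ for generalized elements $x, y: (S, \sigma, \tau) \to (A,p,s)$ translates to $(x, t\sigma) \, C \, (y, t\sigma)$ in $\V$; applying (5) with $u = t\sigma$ and $v$ any generalized element $z: (S, \sigma, \tau) \to (B,q,t)$ (so $g z = \sigma = f x$) yields $(x, z) \, C \, (y, z)$, which is precisely the fibred egg-box condition. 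Regularity of $\V$ guarantees that $\Pt_{\V}(X)$ is regular, so the product of a regular epimorphism with an isomorphism is again regular, supplying the remaining hypothesis of Proposition~\ref{Prop:prod-coeq-egg-box}(2).

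The main obstacle is discovering the correct chain in $(4) \Rightarrow (5)$: the interplay between the telescoping identity $p_{i+1}(y, x, b_\cdot) = p_i(x, y, b_\cdot)$ and the splitting identity $p_i(u, u, c_\cdot(u, v)) = v$ must be aligned so that one coordinate walks from $x$ to $y$ while the other is pinned to $v$, with the compatibility identity $b_j(z, z) = c_j(z, z)$ keeping every intermediate pair inside the pullback. Once the ansatz is fixed, the remainder of the proof is purely bookkeeping, and no further substantive difficulty is anticipated.
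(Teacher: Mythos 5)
Your proof is correct and follows essentially the same route as the paper's: $(1)\Rightarrow(2)\Rightarrow(3)$ via the fibrewise version of Proposition~\ref{Prop:P-NPP}, $(3)\Leftrightarrow(4)$ via Theorem~\ref{Thm:Janelidze-2004}, the same transitivity chain $\beta_1 = \alpha_0, \alpha_1,\dots,\alpha_{n-1}=\beta_n$ built from the matrix elements of $C$ for $(4)\Rightarrow(5)$, and the identification of fibred products with pullbacks to reduce $(5)\Rightarrow(1)$ to Proposition~\ref{Prop:prod-coeq-egg-box}. No substantive differences.
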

\noindent
In the proof below, we make use of the same notation as described in the paragraph preceding  Theorem~\ref{Thm:product-coequalizer-variety}.
\begin{proof}
The implications (1) $\implies$ (2) $\implies$ (3) are the content of Proposition~\ref{Prop:P-NPP}, and (3) $\implies$ (4) is the content of Theorem~\ref{Thm:Janelidze-2004}. We show (4) $\implies$ (5) $\implies$ (1). Suppose that $C$ is a congruence on the pullback $A \times_X B$ in (5), and that we are given $(x, u) C (y, u)$, and suppose that $(x,v), (y,v) \in A \times_X B$ are any two elements. Note that since $(x, u), (y, u),(x,v), (y,v)$ are all elements of $A \times_X B$, it follows that $f(x) = f(y) = g(u) = g(v)$, and since $b_i(z,z) = c_i(z,z)$ we have  $f(b_i(x,y)) = b_i(f(x), f(y)) = c_i(g(u),g(v)) = g(c_i(u,v))$, so that $(b_i(x,y), c_i(u,v)) \in A \times_X B$. Consider the elements $(z_{1,i}, z_{2, i})$ of $C$ given by:
\[
(z_{1,i}, z_{2, i}) =
p_i \bigg(\mat{x}{u}{y}{u}, \mat{y}{u}{x}{u}, \mat{b_1(x,y)}{c_1(u,v)}{b_1(x,y)}{c_1(u,v)}, \cdots,\mat{b_m(x,y)}{c_m(u,v)}{b_m(x,y)}{c_m(u,v)} \bigg).
\]
Then the equations in the statement of the theorem imply that $z_{1,0} = (x,v)$ and $z_{2,n} = (y,v)$, and moreover that $z_{2,i} = z_{1,i+1}$, so that by the transitivity of $C$ we have $(x,v)C(y,v)$. 
For (5) $\implies$ (1), we note that condition (5) implies that $\Pt_{\V}(X)$ satisfies (2) of Proposition~\ref{Prop:prod-coeq-egg-box}, since products in $\Pt_{\V}(X)$ are pullbacks, and the domain functor $\Pt_{\V}(X) \rightarrow \V$ sends equivalence relations in $\Pt_{\V}(X)$ to equivalence relations in $\V$ (since it preserves pullbacks and reflects isomorphisms).
\end{proof}
In light of the theorem above, it is natural to ask if every variety with normal projections satisfies (P). We answer this question in the negative: every subtractive category with finite limits has normal product projections, but not every subtractive variety satisfies (P). Moreover, the product of two normal epimorphisms in a subtractive variety need not be normal, and thus cannot satisfy (P) by Proposition~\ref{Prop:P-NPP}. To see this, consider the subtraction algebra $X$ which has underlying set $\{0,a,b\}$, and whose subtraction is defined as $x - y = x$ if $y = 0$, and $x- y =0$ otherwise. Then consider the subtraction algebra $Y = \{c, 0\}$ whose subtraction is defined as $X$'s is. The function $X \xrightarrow{f} Y$ defined by $f(a) = 0$ and $f(b) = z$ is a normal epimorphism, whose kernel is $\ker(f)= \{0,a\}$. If $f \times 1_Y$ were normal, then $\Eq(f \times 1_Y)$ would be the congruence $C$ generated by $(a,0) \sim (0,0)$. It is then routine to verify that $((a,c),(b,c))$ is not contained in $C$, although it is contained in $\Eq(f \times 1_Y)$.

\section{Concluding remarks}
We have been unable to establish a categorical counterpart of Theorem~\ref{Thm:local-product-coequalizer}, and leave the investigation of this question for a future work. Moreover, for general varieties we have been unable to determine whether or not the property (P) can be characterized by a Mal'tsev condition as we did in Theorem~\ref{Thm:product-coequalizer-variety}. We leave it as an open question of whether or not (P) is a Mal'tsev property for general varieties, and conjecture that it is not. We mention here that it can be shown that every  variety of algebras with constants in which every object is $\mathcal{M}$-coextensive in the sense of \cite{Hoefnagel2019c}, where $\mathcal{M}$ is the class of regular epimorphisms in $\C$, satisfies (P). This is because such varieties are exactly those that have directly decomposable congruences. Thus for example, the category $\mathbf{Ring}$ of unitary rings satisfies (P), but $\mathbf{Ring}$ is not pointed.
\section{Acknowledgements}
This work grew out of a joint work in progress with Professor Zurab Janelidze. I would like to thank him for his valuable input into the work presented here.
\bibliographystyle{plain}
\bibliography{References}

\end{document}